\theoremstyle{plain}
\newtheorem{thm}{Theorem}[section]
\newtheorem{lem}[thm]{Lemma}
\newtheorem{prop}[thm]{Proposition}
\newtheorem{cor}[thm]{Corollary}
\newtheorem{conj}[thm]{Conjecture}
\theoremstyle{definition}
\newtheorem{defi}[thm]{Definition}
\newtheorem{rem}[thm]{Remark}
\tikzstyle{vertex}=[circle, draw, inner sep=0pt, minimum size=6pt] 
\newcommand{\vertex}{\node[vertex]}
\newcommand{\AAA}{\mathcal{A}} 
\newcommand{\CCC}{\mathcal{C}} 
\newcommand{\FFF}{\mathcal{F}} 
\newcommand{\HHH}{\mathcal{H}} 
\title{The graph grabbing game on $\{0,1\}$-weighted graphs}
\author[1]{Soogang Eoh} 
\author[2]{Jihoon Choi
\thanks{Corresponding author: jihoon@cju.ac.kr}}
\affil[1]{Department of Mathematics Education,
Seoul National University, Seoul 08826, Republic of Korea}
\affil[2]{Department of Mathematics Education,
Cheongju University, Cheongju 28503, Republic of Korea}
\begin{document}
\maketitle
\begin{abstract}
The \emph{graph grabbing game} is a two-player game on a weighted connected graph in which two players, Alice and Bob, alternatively remove non-cut vertices one by one to gain the weights on them.
Alice wins the game if she gains at least half of the total weights.
In this paper, we show that on every connected even graph which does contain a fully spiked cycle as an induced subgraph, Alice always has a winning strategy with an arbitrary weight function whose codomain is $\{0,1\}$.
In addition, we give a list of forbidden subgraph for the family of graphs on which Alice has a winning strategy with an arbitrary weight function whose codomain is $\{0,1\}$.
\end{abstract}
\noindent
{\it Keywords.}
graph grabbing game,
weighted graph,
interval graph,
spike,
fully spiked cycle,
forbidden subgraph

\noindent
{{{\it 2010 Mathematics Subject Classification.} 05C57, 91A43}}

\section{Introduction}

In this paper, every graph is assumed to be simple and finite unless otherwise stated.
A \emph{weight function} on a graph $G$ is a map $w$ from $V(G)$ to a set of real numbers.
A graph $G$ with a weight function $w$ is called a \emph{weighted graph}.
In this paper, we only deal with weighted graphs, so we sometimes omit ``weighted''.

For a connected graph $G$, a vertex $x$ is called a \emph{cut vertex} if $G - x$ is disconnected, and called a \emph{non-cut vertex} otherwise.
An \emph{even} (resp.\ \emph{odd}) graph is a graph having an even (resp.\ odd) number of vertices.

The \emph{graph grabbing game} is a two-player game on a weighted connected graph.
Two players, Alice and Bob, alternatively remove a vertex in each of their turn and take the weight on it.
The game rule is that Alice is the starting player, and in each turn, the player must take one of the non-cut vertices of the current graph so that the remaining vertices still form a connected graph.
After all the vertices are taken away, Alice wins the game if and only if she gains at least half of the total weight.

The graph grabbing game was introduced by Winkler~\cite{winkler2003mathematical}.
In his book, he showed that Alice has a winning strategy on every even path and that she does not have a winning strategy on some odd path.
Knauer \emph{et al}.~\cite{knauer2011eat} proved that Alice can always gain at least half of the total weight on every even cycle.
Micek and Walczak~\cite{micek2011graph} showed that Alice guarantees at least quarter of the total weight on every even tree.
Then they gave a conjecture that Alice always has a winning strategy on every even tree.
This conjecture was solved by Seacrest and Seacrest~\cite{seacrest2012grabbing} and then they conjectured that Alice can win the game on every connected bipartite even graph.
Egawa \emph{et al}.~\cite{egawa2018graph} supported this conjecture by showing that Alice can win the game on every even $K_{m,n}$-tree.

Now we introduce the following notations and restate the previous results.

\begin{defi}\label{def:aaa}
For a positive integer $k$, let $\AAA_k$ be the set of connected graphs on which Alice has a winning strategy with an arbitrary weight function whose codomain consists of $k$ real numbers.
Let $\AAA = \bigcap_{k=1}^\infty \AAA_k$.
\end{defi}
We note that $\AAA_1 \supset \AAA_2 \supset \AAA_3 \supset \cdots$.
Therefore $\AAA$ is the set of connected graphs on which Alice has a winning strategy with an arbitrary weight function.
We also note that $\AAA_1$ is the set of connected graphs and $\AAA_2$ is equal to the set of connected graphs on which Alice has a winning strategy with an arbitrary weight function whose codomain is $\{0,1\}$.
It is easy to see that Alice cannot win the game on the graph in Figure~\ref{fig:P3} as long as Bob plays optimally, which implies $\AAA_1 \supsetneq \AAA_2$.
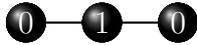
\begin{figure}
\begin{center}
\begin{tikzpicture}[auto,thick]
    \tikzstyle{player}=[minimum size=15pt,inner sep=0pt,outer sep=0pt,ball color=black,circle]

{

    \path[white] (0,0)   node [player]   (a) {\bf $0$};
    \path[white] (1,0)   node [player]  (b) {\bf $1$};
    \path[white] (2,0)   node [player]  (c) {\bf $0$};
}

{
   \draw[black,thick,-] (a) -- ++(b) ;
   \draw[black,thick,-] (b) -- ++(c) ;}

    \end{tikzpicture}
\end{center}
\caption{A graph $P_3$ and a weight function $w: V(P_3) \to \{0,1\}$} 
\label{fig:P3}
\end{figure}


\begin{defi}\label{def:hhh}
For a positive integer $k$, let $\HHH_k$ be the set of graphs $G$ satisfying the property that  every connected subgraph of $G$ induced by a vertex subset of an even size belongs to $\AAA_k$.
Let $\HHH = \bigcap_{k=1}^\infty \HHH_k$.
\end{defi}
We note that $\HHH_1 \supset \HHH_2 \supset \HHH_3 \supset \cdots$.
Therefore $\HHH$ is the set of graphs $G$ satisfying the property that, on every connected subgraph of $G$ induced by a vertex subset of an even size, Alice has a winning strategy with an arbitrary weight function.
By Definitions~\ref{def:aaa} and~\ref{def:hhh}, if a connected even graph belongs to $\HHH_k$ (resp.\ $\HHH$),  then it belongs to $\AAA_k$ (resp.\ $\AAA$).
We also note that $\HHH_1$ is the set of graphs.
Yet, there is a graph not belonging to $\HHH_2$ (which will be presented in Figure~\ref{fig:asteroidal triple}), which implies $\HHH_1 \supsetneq \HHH_2$.

It is clear that, for a positive integer $k$, the property that a graph belongs to $\HHH_k$ is hereditary, that is, for any graph in $\HHH_k$, all of its induced subgraphs belong to $\HHH_k$.
Therefore the property for a graph to be in $\HHH$ is also hereditary.

Following our terminology, the results of \cite{knauer2011eat}, \cite{seacrest2012grabbing}, and \cite{egawa2018graph} can be restated as follows:

\begin{thm}[\cite{knauer2011eat}, \cite{seacrest2012grabbing},  \cite{egawa2018graph}]\label{thm:previous results}
All the cycles and $K_{m,n}$-trees belong to $\HHH$.
\end{thm}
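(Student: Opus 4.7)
The plan is simply to observe that Theorem~\ref{thm:previous results} is the translation of the cited results into the language of Definition~\ref{def:hhh}, so I only need to verify, for each graph class listed, that every connected induced subgraph on an even vertex set lies in $\AAA$.

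For a cycle $C_n$, every proper connected induced subgraph is a path on a set of consecutive vertices along the cycle, while the only non-proper connected induced subgraph is $C_n$ itself. Hence every connected even induced subgraph of $C_n$ is either an even path $P_{2k}$ with $2k \le n-1$ or, when $n$ is even, the whole cycle $C_n$. The first case lies in $\AAA$ by Winkler's even-path theorem (equivalently, by the even-tree theorem of Seacrest and Seacrest applied to paths), and the second case lies in $\AAA$ by the theorem of Knauer et al. Thus $C_n \in \HHH$.

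For a $K_{m,n}$-tree $T$, the key structural step is to check that every connected induced subgraph $T'$ of $T$ is again a $K_{m',n'}$-tree, allowing degenerate blocks in which one part has at most one vertex so that the block is a single edge or a star. Granting this observation, every connected even such $T'$ lies in $\AAA$ by Egawa et al., or by Seacrest and Seacrest in the purely tree-like degenerate case, and hence $T \in \HHH$.

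I expect the main obstacle to be this structural check for $K_{m,n}$-trees, since it requires inspecting the block-cut structure of $T$ and verifying that deleting an arbitrary vertex subset that preserves connectivity still yields a graph whose blocks are complete bipartite. Once that is settled, the already-known results finish both cases without further computation; the cycle half of the theorem, by contrast, is essentially immediate from the observation about induced paths.
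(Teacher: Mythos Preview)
The paper does not prove Theorem~\ref{thm:previous results}: it is offered purely as a restatement, in the language of Definition~\ref{def:hhh}, of results established in the cited references, and no argument is given. So there is nothing in the paper to compare against, and your proposal already goes further than the paper by actually checking that ``Alice wins on every even such graph'' upgrades to membership in $\HHH$.

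Your treatment of cycles is correct and complete: the connected induced subgraphs of $C_n$ are precisely paths and (when $n$ is even) $C_n$ itself, handled respectively by Winkler and by Knauer et al.

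For $K_{m,n}$-trees you isolate the only real issue---closure under taking connected induced subgraphs---and you are candid that you have not carried out the check. One refinement to your sketch: after deleting vertices, different blocks of a $K_{m,n}$-tree may shrink to complete bipartite graphs of \emph{different} shapes (some to $K_{m',n'}$ with $m',n'\ge 2$, others to stars that split into edge-blocks), so the induced subgraph is in general not a $K_{m',n'}$-tree for any single pair $(m',n')$. What one actually needs is the Egawa--Enomoto--Matsumoto result in the form ``Alice wins on every even connected graph all of whose blocks are complete bipartite,'' and then the block-by-block structural observation you describe does go through. The paper does not supply this verification either; it simply cites the result.
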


Theorem~\ref{thm:previous results} supports the conjecture given by Seacrest and Seacrest~\cite{seacrest2012grabbing}, which can be restated as follows:

\begin{conj}[\cite{seacrest2012grabbing}]\label{conj:bipartite}
Every bipartite graph belongs to $\HHH$.
\end{conj}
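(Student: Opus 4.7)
The plan is to treat Conjecture~\ref{conj:bipartite} as the open problem it is and outline the most natural line of attack, which is to try to generalize the matching-based strategy that Seacrest and Seacrest~\cite{seacrest2012grabbing} used for even trees. Let $G$ be a connected bipartite graph with bipartition $(X,Y)$ and an arbitrary weight function $w$. Because $\HHH$ is hereditary, it suffices to exhibit, for each connected even induced subgraph of $G$, a strategy by which Alice secures at least half of the total weight; I would therefore focus on the case where $G$ itself is even.

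First I would set up the matching framework. Pick an auxiliary matching $M$ in $G$ — ideally perfect (guaranteed by Hall's theorem when $|X|=|Y|$ and the deficiency condition holds), or otherwise a matching covering the heavier of the two sides. Alice's intended strategy is to open with a carefully chosen non-cut vertex $a_0$ (typically from whichever of $X$ or $Y$ carries more weight) and, thereafter, to answer each of Bob's picks $v$ by removing its $M$-partner $M(v)$. Each matched pair $\{v, M(v)\}$ is then split one-one between the two players, and with an intelligent choice of $M$ and $a_0$ one hopes to force Alice's side of every pair to be at least as heavy as Bob's, yielding a total of at least $w(V(G))/2$.

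The verification then reduces to two checks: (i) that the vertex $M(v)$ Alice wishes to take is a non-cut vertex of the current graph at every stage, and (ii) that the few vertices left unmatched after Alice's opening move can be absorbed without upsetting the weight count. In trees (i) is automatic from leaf-stripping, which is precisely why the Seacrest--Seacrest argument succeeds.

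The main obstacle — and the reason the conjecture has resisted proof — is exactly (i): in a general bipartite graph, after several rounds the matched partner $M(v)$ can easily become a cut vertex of what remains, and no fixed matching is known to avoid this. Real progress therefore seems to require either a dynamic re-matching argument that repairs the pairing on the fly, a structural decomposition of bipartite graphs into tree-like pieces (blocks, ears, or $K_{m,n}$-trees as in Theorem~\ref{thm:previous results}) on which the pairing remains stable, or a reduction to weight codomains of small size. I expect no clean argument settles (i) in full generality, and my fallback would be the approach of the present paper: restrict to $\AAA_2$ (i.e.\ $\{0,1\}$-codomain weightings), where the combinatorics of the pairs are simple enough that one can hope to reason about a family of forbidden induced subgraphs and, conversely, about structural conditions such as containing a fully spiked cycle that force Alice's win.
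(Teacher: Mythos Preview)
The statement you were asked to address is Conjecture~\ref{conj:bipartite}, and the paper does \emph{not} prove it: it is quoted from Seacrest and Seacrest and left open. So there is no ``paper's own proof'' to compare your attempt against. You correctly recognise this at the outset and present not a proof but a sketch of the natural matching-based line of attack together with an explanation of where it breaks down (the matched partner may become a cut vertex). That is an accurate description of the difficulty, and your fallback of restricting to $\{0,1\}$-weights and forbidden induced subgraphs is exactly the direction the present paper pursues, though the paper's results on $\HHH_2$ (Theorem~\ref{thm:spiked-free graphs belongs to HHH2} and Propositions~\ref{prop:even cycles not forbidden}--\ref{prop:odd cycles forbidden}) do not settle the bipartite case even for $\HHH_2$.

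In short: there is no gap to flag because you do not claim a proof, and there is no divergence of approach to discuss because the paper offers no proof either. Your write-up is a reasonable status report on an open problem, not a proof proposal in the usual sense.
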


In this paper, we put our focus on the family $\HHH_2$.
We will show that every graph which does not contain any ``fully spiked cycle'' as an induced subgraph belongs to $\HHH_2$, which consequently asserts that every interval graph belongs to $\HHH_2$.
Then we present a list of forbidden subgraphs for $\HHH_2$.

\section{Main Results: On the family $\HHH_2$}

In this section, we study on the family $\HHH_2$.
We use the following notations. On a connected graph $G$ with $2k$ vertices, we denote by $a_i$ and $b_i$ the vertices which Alice and Bob take in their $i$th turn, respectively, for $i=1,\ldots,k$.
In addition, we assume that every weight function has the codomain $\{0,1\}$.

\begin{defi}
Let $G$ be a connected graph with at least three vertices, $x$ be a leaf, and $y$ be the unique neighbor of $x$ in $G$.
If $y$ is not a cut vertex of $G-x$, then we call $x$ a \emph{spike} in $G$.
\end{defi}

For a connected graph $G$, we denote the set of non-cut vertices of $G$ by $\Omega(G)$.

\begin{prop}\label{prop:omega subset}
For a connected graph $G$ with at least three vertices and a non-cut vertex $x$ of $G$, $\Omega(G-x) \subset \Omega(G)$ if and only if $x$ is not a spike in $G$.
\end{prop}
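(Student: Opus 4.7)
The plan is to reduce the biconditional to a single structural observation about how adding back the vertex $x$ to $G-z$ can create new cut vertices. I would work with a hypothetical witness $z \in \Omega(G-x) \setminus \Omega(G)$: the existence of such a $z$ is exactly the failure of the containment $\Omega(G-x) \subset \Omega(G)$, and the goal is to show that such a $z$ exists if and only if $x$ is a spike (in which case $z$ is forced to be the unique neighbor of $x$).

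For the direction ``$x$ is not a spike $\Rightarrow \Omega(G-x) \subset \Omega(G)$'', I would argue by contradiction. Take $z \in \Omega(G-x)$ with $z$ a cut vertex of $G$. Then $G-x-z$ is connected while $G-z$ is disconnected. Since $G-z$ is obtained from $G-x-z$ by adding the single vertex $x$ back together with its incident edges, the only way $G-z$ can be disconnected is for $x$ to sit in a component by itself in $G-z$, i.e.\ $z$ is the unique neighbor of $x$ in $G$. Then $x$ is a leaf whose unique neighbor $z$ is a non-cut vertex of $G-x$, and so by definition $x$ is a spike, contradicting the hypothesis.

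For the converse direction, I would exhibit an explicit element of $\Omega(G-x) \setminus \Omega(G)$. Assuming $x$ is a spike with unique neighbor $y$, the definition of spike immediately gives $y \in \Omega(G-x)$. To see $y \notin \Omega(G)$, I would invoke the hypothesis $|V(G)| \geq 3$ to produce a vertex $u \neq x, y$; removing $y$ from $G$ then isolates the leaf $x$ from $u$, so $G-y$ is disconnected and $y$ is a cut vertex of $G$.

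The whole argument is short, and the only step requiring care is the use of $|V(G)| \geq 3$ in the converse: without this hypothesis $y$ could be the only non-$x$ vertex of $G$, in which case $G-y$ would trivially be connected and the argument would collapse. No deeper obstacle is anticipated; the key insight is simply that a non-cut vertex of $G-x$ can become a cut vertex in $G$ only by being responsible for connecting $x$ to the rest of the graph, which forces it to be $x$'s unique neighbor.
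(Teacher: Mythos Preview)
Your proof is correct and follows essentially the same approach as the paper's. Your ``$\Leftarrow$'' direction is slightly slicker: where the paper does a case split on whether $x$'s component in $G-z$ is trivial, you collapse this into the single observation that adjoining $x$ to the connected graph $(G-x)-z$ can only produce a disconnected graph if $x$ lands in its own component, which forces $z$ to be the unique neighbor of $x$.
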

\begin{proof}
($\Rightarrow$) Suppose that $x$ is a spike in $G$.
Let $y$ be the neighbor of $x$ in $G$.
Then $y \notin \Omega(G)$.
However, since $x$ is a spike, $y \in \Omega(G-x)$.
Therefore $\Omega(G-x) \not\subset \Omega(G)$.

($\Leftarrow$) Suppose that $x$ is not a spike in $G$.
If $\Omega(G) = V(G)$, then we are done.
Suppose $\Omega(G) \neq V(G)$, i.e, $G$ has a cut vertex.
Take a cut vertex $z$ in $G$.
Let $C_1, C_2, \ldots, C_k$ ($k \ge 2$) be the components of $G-z$.
Without loss of generality, we may assume $x \in V(C_1)$.
Assume $|V(C_1)| \ge 2$.
Then $V(C_1 - x) \neq \emptyset$ and so $(G-x)-z$ has at least $k$ components.
By the way, $G-x$ is connected by the hypothesis that $x$ is a non-cut vertex.
Therefore $z$ is a cut-vertex in $G-x$.
Assume $|V(C_1)| = 1$, i.e., $V(C_1) = \{x\}$.
Then $z$ is the unique neighbor of $x$ and so $x$ is a leaf in $G$.
Since $x$ is not a spike in $G$ and $G$ has at least three vertices, $z$ is a cut-vertex in $G-x$.
Therefore $\Omega(G-x) \subset \Omega(G)$.
\end{proof}

For an integer $n \ge 3$, the \emph{fully spiked $n$-cycle}, denoted by $C^*_n$, is defined to be the graph obtained from the cycle $C_n$ by attaching a leaf to each vertex of $C_n$, that is, $V(C_n^*) = \{x_1, \ldots, x_n, y_1, \ldots, y_n\}$ and $E(C_n^*) = \{x_1x_2, x_2x_3, \ldots, x_{n-1}x_n, x_nx_1\} \cup \{x_iy_i \mid i=1,\ldots,n\}$ (see Figure~\ref{fig:spiked cycles}).
If $n$ is even (resp.\ odd), then $C^*_n$ is called a fully spiked \emph{even} (resp.\ \emph{odd}) cycle.
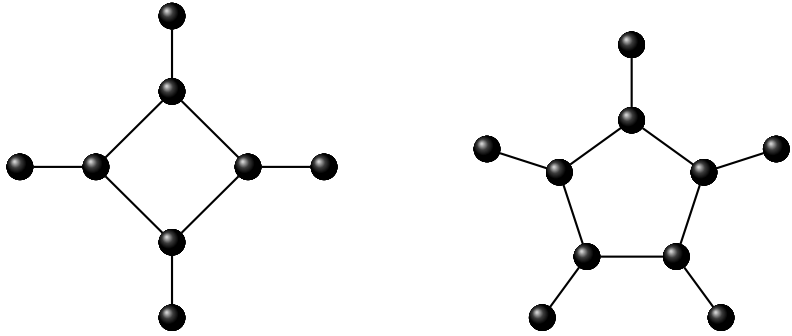
\begin{figure}
\begin{center}
\begin{tikzpicture}[rotate=90][auto,thick]
    \tikzstyle{player}=[minimum size=10pt,inner sep=0pt,outer sep=0pt,ball color=black,circle]

{
    \path[white] (0:1cm)    node [player]   (x1) {\bf $$};
    \path[white] (0:2cm)    node [player]   (y1) {\bf $$};
    \path[white] (90:1cm)   node [player]   (x2) {\bf $$};
    \path[white] (90:2cm)   node [player]   (y2) {\bf $$};
    \path[white] (90*2:1cm)   node [player]  (x3) {\bf $$};
    \path[white] (90*2:2cm)   node [player]  (y3) {\bf $$};
    \path[white] (90*3:1cm)   node [player]  (x4) {\bf $$};
    \path[white] (90*3:2cm)   node [player]  (y4) {\bf $$};
}

{
   \draw[black,thick,-] (x1) -- ++(y1) ;
   \draw[black,thick,-] (x2) -- ++(y2) ;
   \draw[black,thick,-] (x3) -- ++(y3) ;
   \draw[black,thick,-] (x4) -- ++(y4) ;

   \draw[black,thick,-] (x1) -- ++(x2) ;
   \draw[black,thick,-] (x2) -- ++(x3) ;
   \draw[black,thick,-] (x3) -- ++(x4) ;
   \draw[black,thick,-] (x4) -- ++(x1) ;
}

    \end{tikzpicture}
\qquad \qquad
\begin{tikzpicture}[rotate=90][auto,thick]
    \tikzstyle{player}=[minimum size=10pt,inner sep=0pt,outer sep=0pt,ball color=black,circle]

{
    \path[white] (0:1cm)    node [player]   (x1) {\bf $$};
    \path[white] (0:2cm)    node [player]   (y1) {\bf $$};
    \path[white] (72:1cm)   node [player]   (x2) {\bf $$};
    \path[white] (72:2cm)   node [player]   (y2) {\bf $$};
    \path[white] (72*2:1cm)   node [player]  (x3) {\bf $$};
    \path[white] (72*2:2cm)   node [player]  (y3) {\bf $$};
    \path[white] (72*3:1cm)   node [player]  (x4) {\bf $$};
    \path[white] (72*3:2cm)   node [player]  (y4) {\bf $$};
    \path[white] (72*4:1cm)   node [player]  (x5) {\bf $$};
    \path[white] (72*4:2cm)   node [player]  (y5) {\bf $$};

}

{
   \draw[black,thick,-] (x1) -- ++(y1) ;
   \draw[black,thick,-] (x2) -- ++(y2) ;
   \draw[black,thick,-] (x3) -- ++(y3) ;
   \draw[black,thick,-] (x4) -- ++(y4) ;
   \draw[black,thick,-] (x5) -- ++(y5) ;

   \draw[black,thick,-] (x1) -- ++(x2) ;
   \draw[black,thick,-] (x2) -- ++(x3) ;
   \draw[black,thick,-] (x3) -- ++(x4) ;
   \draw[black,thick,-] (x4) -- ++(x5) ;
   \draw[black,thick,-] (x5) -- ++(x1) ;
}

    \end{tikzpicture}
\end{center}
\caption{Fully spiked cycles $C_4^*$ and $C^*_5$}
\label{fig:spiked cycles}
\end{figure}
A graph is said to be \emph{$C^*$-free} if it does not contain the fully spiked $n$-cycle as an induced subgraph for any integer $n \ge 3$.

\begin{lem}\label{lem:spiked-free non-cut vertex}
For a connected $C^*$-free graph $G$, every cycle of $G$ contians a non-cut vertex of $G$.
\end{lem}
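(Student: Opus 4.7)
The plan is to argue by contradiction. Suppose $G$ is $C^*$-free and yet contains a cycle whose every vertex is a cut vertex of $G$. Among all such cycles in $G$, choose one of minimum length, say $C = x_1 x_2 \cdots x_n x_1$; the goal is to extract an induced copy of $C_n^*$ from $G$, contradicting the $C^*$-freeness.

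The first step is to observe that $C$ must be induced. If some edge $x_i x_j$ with $j \not\equiv i, i \pm 1 \pmod n$ were a chord, it would split $C$ into two cycles each of length at least $3$ and strictly less than $n$, whose vertex sets lie inside $\{x_1, \ldots, x_n\}$ and hence consist entirely of cut vertices of $G$. This would contradict the minimality of $n$; so $C$ has no chord.

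Next, for each $i$, since $x_i$ is a cut vertex and the remaining vertices of $C$ all lie in a single component of $G - x_i$ (connected via the path $C - x_i$), there exists another component $H_i$ of $G - x_i$, and $x_i$ has at least one neighbor in $H_i$; fix such a neighbor $y_i$. The pivotal claim is that the components $H_1, \ldots, H_n$ are pairwise disjoint. If some vertex $v$ belonged to $H_i \cap H_j$ for $i \neq j$, then every path in $G$ from $v$ to some $x_k$ with $k \notin \{i, j\}$ would be forced to contain both $x_i$ and $x_j$; but the initial segment of such a path up to whichever of $x_i, x_j$ appears first would reach that vertex without meeting the other, contradicting the separation properties defining $H_i$ and $H_j$.

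Finally, I would verify that the set $\{x_1, \ldots, x_n, y_1, \ldots, y_n\}$ induces a copy of $C_n^*$ in $G$. The required cycle edges $x_i x_{i+1}$ and spike edges $x_i y_i$ are present by construction, and $C$ has no further chord by the first step. An edge $y_i x_j$ with $j \neq i$ would give a $y_i$--$x_j$ edge in $G - x_i$, contradicting that $H_i$ is a component of $G - x_i$ not containing $x_j$. An edge $y_i y_j$ with $i \neq j$ would force $y_j \in H_i$, but $y_j$ is adjacent to $x_j$ in $G - x_i$ and so lies in the component of $G - x_i$ containing $x_j$, which is disjoint from $H_i$. This produces an induced $C_n^*$ in $G$, contradicting the hypothesis, so the theorem follows. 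The main technical hurdle is the pairwise disjointness of the components $H_i$, upon which every non-edge verification ultimately rests.
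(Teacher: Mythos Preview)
Your argument is correct and follows essentially the same route as the paper: pass to a chordless cycle consisting entirely of cut vertices, attach to each $x_i$ a neighbor $y_i$ lying in a component of $G - x_i$ not containing the rest of the cycle, and verify that the resulting $2n$ vertices induce $C_n^*$. The paper obtains the chordless cycle by direct extraction rather than minimality and packages all non-edge checks into the single observation that $y_j$ lies in the cycle-containing component of $G - x_i$ for every $j \neq i$; your pairwise-disjointness claim for the $H_i$ is true but is actually stronger than what your own final step needs, since each non-edge verification you give only uses that $H_i$ is disjoint from the component of $G - x_i$ containing the cycle.
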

\begin{proof}
Suppose, to the contrary, that $G$ has a cycle $C$ every vertex on which is a cut-vertex of $G$.
Then there exists a chordless cycle, say $H := v_1v_2\cdots v_nv_1$ ($n \ge 3$), such that $V(H) \subset V(C)$.
For each $i = 1,\ldots,n$, let $X_i$ be the component of $G-v_i$ containing $V(H-v_i)$ and let $w_i \notin V(X_i)$ be a neighbor of $v_i$ in $G$.
For convenience, let $V = V(H)$ and $W = \{w_1, \ldots, w_n\}$.
Then $(V \setminus \{v_i\}) \cup (W \setminus \{w_i\}) \subset X_i$ for each $i=1,\ldots,n$.
In addition, since $w_i \notin V(X_i)$, $w_i$ is neither equal to nor adjacent to any vertex in $(V \setminus \{v_i\}) \cup (W \setminus \{w_i\})$ for any $i=1,\ldots,n$.
Therefore the subgraph of $G$ induced by $V \cup W$ is isomorphic to the fully spiked cycle $C^*_n$, 
which contradicts the hypothesis that $G$ is $C^*$-free.
\end{proof}


The following theorem is one of the main results of this paper.

\begin{thm}\label{thm:spiked-free graphs belongs to HHH2}
Every $C^*$-free graph belongs to $\HHH_2$.
\end{thm}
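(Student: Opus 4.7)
The plan is to prove by induction on $|V(G)|$ that every connected even $C^*$-free graph belongs to $\AAA_2$. Since $C^*$-freeness is hereditary and $\HHH_2$ is defined through connected even induced subgraphs, this immediately yields the theorem. The base case $|V(G)|=2$ is trivial: Alice takes the heavier of the two vertices.

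For the inductive step with $|V(G)|=2k\ge 4$ and a weight function $w$, I would split according to $w$. If some non-cut vertex $v$ of $G$ has $w(v)=1$, Alice plays $a_1=v$; whatever Bob's response $b_1$ is, the graph $G':=G-\{a_1,b_1\}$ is a strictly smaller connected even $C^*$-free graph, so the inductive hypothesis gives Alice a winning strategy on $G'$. Her total then is at least $1+(W-1-w(b_1))/2=(W+1-w(b_1))/2\ge W/2$ since $w(b_1)\le 1$.

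The delicate case is when every non-cut vertex of $G$ has weight $0$. Here Alice must play a weight-$0$ vertex without ``exposing'' a weight-$1$ vertex to Bob; equivalently, she wants a non-cut vertex that is not a spike, so that by Proposition~\ref{prop:omega subset} we have $\Omega(G-a_1)\subseteq\Omega(G)$ and Bob is forced to take a weight-$0$ vertex as well. If $G$ contains a cycle, Lemma~\ref{lem:spiked-free non-cut vertex} furnishes a non-cut vertex $v$ lying on that cycle; such a $v$ has degree at least $2$ and is therefore not a spike, so Alice plays $a_1=v$, Bob's move $b_1$ automatically has weight $0$, and the inductive hypothesis applied to $G'$ yields a total of at least $0+W/2=W/2$. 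If $G$ has no cycle, then $G$ is an even tree, and the theorem of Seacrest and Seacrest~\cite{seacrest2012grabbing} places every even tree in $\AAA\subseteq\AAA_2$, handling this subcase directly.

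The main obstacle is precisely the subcase where all non-cut vertices carry weight $0$: without a structural guarantee, Alice could be forced to play a spike and hand Bob a weight-$1$ vertex, breaking the inductive bound. Lemma~\ref{lem:spiked-free non-cut vertex} is exactly the tool that rules this out whenever $G$ is not a tree, while the tree case is absorbed by prior work, so the $C^*$-free hypothesis enters the proof in a tightly focused way.
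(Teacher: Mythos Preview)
Your proposal is correct and follows essentially the same argument as the paper: induction on the number of vertices, the tree case deferred to Seacrest--Seacrest, and the non-tree case split according to whether $\Omega(G)$ contains a weight-$1$ vertex, with Lemma~\ref{lem:spiked-free non-cut vertex} and Proposition~\ref{prop:omega subset} combining to guarantee a safe first move when it does not. The only cosmetic differences are that the paper disposes of the tree case before the case split (rather than inside the all-zero case) and compares Alice's and Bob's sums directly instead of computing Alice's share of the total weight.
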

\begin{proof}
Let $G$ be a $C^*$-free graph.
To show that $G \in \HHH_2$, take a connected subgraph $H$ of $G$ induced by a vertex subset of an even size.
Clearly $H$ is $C^*$-free.
We prove that Alice has a winning strategy on $H$ by induction on $|V(H)|$.
If $|V(H)|=2$, then Alice certainly has a winning strategy on $H = K_2$.
Assume that Alice always has a winning strategy on $H$ when $|V(H)| \le 2k$ for some positive integer $k$.
Suppose $|V(H)| = 2k+2$.
Let $w : V(H) \to \{0,1\}$ be an arbitrary weight function on $H$.

If $H$ is a tree, then Alice has a winning strategy on $H$ by Theorem~\ref{thm:previous results} and we are done.
Suppose that $H$ is not a tree.

\medskip

{\bf Case 1.} There is a vertex in $\Omega(H)$ of weight $1$.

Alice takes a vertex in $\Omega(H)$ of weight $1$ as $a_1$.
Then $w(b_1) \le 1 = w(a_1)$.
The graph $H - \{a_1,b_1\}$ is obviously $C^*$-free and still connected by the game rule.
In addition, $H - \{a_1,b_1\}$ is a subgraph of $G$ induced by $2k$ vertices.
Therefore, the induction hypothesis tells us that Alice has a winning strategy on $H - \{a_1,b_1\}$, i.e., $\sum_{i=2}^{k+1} w(a_i) \ge \sum_{i=2}^{k+1} w(b_i)$.
Thus $\sum_{i=1}^{k+1} w(a_i) \ge \sum_{i=1}^{k+1} w(b_i)$ and Alice can win the game on $H$.

\medskip

{\bf Case 2.} Every vertex in $\Omega(H)$ has weight $0$.

Since $H$ is not a tree, $H$ has a cycle.
By Lemma~\ref{lem:spiked-free non-cut vertex}, $H$ has a non-cut vertex $x$ on the cycle.
Alice takes $x$ as $a_1$.
Since $x$ is a vertex on a cycle, $x$ is not a spike in $H$ and so, by Proposition~\ref{prop:omega subset}, $\Omega(H-x) \subset \Omega(H)$.
By the case assumption, every vertex in $\Omega(H-x)$ has weight $0$, so $w(b_1)=0$.
Again, we apply the induction hypothesis on $H - \{a_1,b_1\}$ to conclude that Alice wins the game on $H$.

Hence $G \in \HHH_2$ and this completes the proof.
\end{proof}

\begin{cor}
Every interval graph belongs to $\HHH_2$.
\end{cor}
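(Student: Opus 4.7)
The plan is to reduce the corollary to Theorem~\ref{thm:spiked-free graphs belongs to HHH2} by showing that every interval graph is $C^*$-free. Since the class of interval graphs is hereditary (any induced subgraph of an interval graph is an interval graph), it suffices to prove that for every integer $n \ge 3$, the fully spiked cycle $C_n^*$ is itself not an interval graph; then no interval graph can contain $C_n^*$ as an induced subgraph.

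For $n \ge 4$, I would argue by chordality: interval graphs are chordal, but the vertices $\{x_1,\dots,x_n\}$ induce a cycle of length $n \ge 4$ in $C_n^*$, so $C_n^*$ contains an induced chordless cycle of length at least $4$ and hence fails to be chordal. This already excludes $C_n^*$ from the class of interval graphs for every $n \ge 4$.

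For $n = 3$, the graph $C_3^*$ (the ``net'') is chordal, so chordality alone does not finish the job. Here I would invoke the Lekkerkerker--Boland characterization, that a graph is an interval graph if and only if it is chordal and contains no asteroidal triple. I would then verify that the three leaves $\{y_1, y_2, y_3\}$ form an asteroidal triple in $C_3^*$: for any two indices $i \neq j$, the path $y_i\, x_i\, x_j\, y_j$ avoids the closed neighborhood of the remaining leaf $y_\ell$, namely $\{y_\ell, x_\ell\}$. Thus $C_3^*$ is not an interval graph. (Alternatively, one can avoid Lekkerkerker--Boland by using the Helly property of intervals on the line: any interval representation of $C_3^*$ would force $I_{x_1} \cap I_{x_2} \cap I_{x_3} \neq \emptyset$, and a short case analysis on which side of this common point each $I_{y_i}$ lies yields a contradiction.)

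Combining the two cases, no $C_n^*$ is an interval graph for any $n \ge 3$; by heredity, no interval graph contains any $C_n^*$ as an induced subgraph, so every interval graph is $C^*$-free, and the corollary follows from Theorem~\ref{thm:spiked-free graphs belongs to HHH2}. The only real (and minor) obstacle is the $n=3$ case, which needs something beyond chordality, either the Lekkerkerker--Boland theorem or a direct Helly-type argument.
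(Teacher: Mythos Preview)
Your proposal is correct and follows essentially the same route as the paper: both reduce to Theorem~\ref{thm:spiked-free graphs belongs to HHH2} by showing interval graphs are $C^*$-free, handling $n\ge 4$ via chordality and $n=3$ separately. The only difference is that the paper simply cites ``a well-known property that an interval graph is $C_3^*$-free,'' while you supply the justification via the Lekkerkerker--Boland asteroidal-triple criterion (or the Helly alternative); this extra detail is welcome but does not change the argument.
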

\begin{proof}
Let $G$ be an interval graph.
By a well-known property that an interval graph is $C_3^*$-free, $G$ is $C^*_3$-free.
In addition, since no interval graph contains an induced cycle of length at least four, $G$ is $C^*_n$-free for any $n \ge 4$.
Hence the corollary follows from Theorem~\ref{thm:spiked-free graphs belongs to HHH2}.
\end{proof}

For a family $\FFF$ of graphs, a graph $H$ is said to be \emph{forbidden for $\FFF$} if no graph containing $H$ as an induced subgraph belongs to $\FFF$.

For a positive integer $k$, let $\CCC_k$ be the set of graphs which are forbidden for $\HHH_k$.
Then $\CCC_1 \subset \CCC_2 \subset \CCC_3 \subset \cdots$.
We note that $\CCC_1 = \emptyset$ and that $\bigcup_{k=1}^\infty \CCC_k$ is the set of graphs forbidden for $\HHH$.
Theorem~\ref{thm:spiked-free graphs belongs to HHH2} tells us that no $C^*$-free graph belongs to $\CCC_2$.
Then it would be interesting to ask whether the fully spiked cycles belong to $\CCC_2$.

\begin{prop}\label{prop:even cycles not forbidden}
No fully spiked even cycle belongs to $\CCC_2$.
\end{prop}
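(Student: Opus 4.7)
The plan is to establish the stronger statement that $C_n^*$ itself belongs to $\HHH_2$ for every even $n \geq 4$; since $C_n^*$ contains itself as an induced subgraph, this immediately yields $C_n^* \notin \CCC_2$.

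First I would classify the connected even-order induced subgraphs $H$ of $C_n^*$. Since each spike $y_i$ has only $x_i$ as its neighbor, connectivity of $H$ forces $y_i \in V(H) \Rightarrow x_i \in V(H)$, and the cycle vertices of $H$ must induce a connected subgraph of $C_n$. Hence either some $x_i$ is missing from $V(H)$ and $H$ is a caterpillar (so Alice wins by Theorem~\ref{thm:previous results}), or all cycle vertices lie in $V(H)$ and $H$ is the cycle $C_n$ with spikes attached to some subset $I \subseteq \{1,\ldots,n\}$. In the latter situation, if $I \subsetneq \{1,\ldots,n\}$ then the only induced cycle of $H$ is $C_n$ itself and that cycle is not fully spiked, so $H$ is $C^*$-free and hence belongs to $\HHH_2$ by Theorem~\ref{thm:spiked-free graphs belongs to HHH2}. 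The only remaining task is to exhibit Alice's winning strategy on $C_n^*$ itself.

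For that I would note that the non-cut vertices of $C_n^*$ are precisely the spikes $y_1,\ldots,y_n$, so $a_1$ must be some $y_i$. Alice chooses by the rule: take a weight-$1$ spike if one exists; else take $y_j$ for some $j$ with $w(x_j)=0$ if such $j$ exists; else (every $x_j$ has weight $1$ and every spike has weight $0$) take any $y_i$. In the first two cases a direct check shows $w(a_1) \geq w(b_1)$, because the non-cut vertices of $C_n^* - y_i$ are the remaining spikes together with the newly exposed cycle vertex $x_i$, all of weight at most $w(a_1)$. In every case the residual $C_n^* - \{a_1, b_1\}$ is either a caterpillar or the cycle $C_n$ with at most $n-2$ spikes, so by the classification above it lies in $\AAA_2$, and Alice wins it since she moves next.

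The delicate point lies in the third case. There $w(a_1) = 0$ and Bob's optimal response is $b_1 = x_i$ of weight $1$; the residual is a caterpillar of total weight $n-1$. By Theorem~\ref{thm:previous results} Alice secures at least $\lceil (n-1)/2 \rceil$ of that weight, which equals $n/2$ because $n$ is even. Her overall total is therefore at least $n/2$, exactly half of the full weight $n$, and she wins. This ceiling calculation is the step that relies crucially on the parity hypothesis, and is exactly what would fail if $n$ were odd.
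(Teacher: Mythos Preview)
Your proof is correct, and the core mechanism matches the paper's: both reduce to showing $C_n^*\in\HHH_2$, both observe that every residual $C_n^*-\{a_1,b_1\}$ is $C^*$-free and hence covered by Theorem~\ref{thm:spiked-free graphs belongs to HHH2}, and both invoke an integrality/ceiling argument to recover the single point Alice may concede in the opening round. The organization differs, however. You case on the weight pattern (weight-$1$ spike exists; else weight-$0$ cycle vertex exists; else all spikes $0$ and all cycle vertices $1$), specify Alice's move accordingly, and in the last case apply the ceiling bound $\lceil (n-1)/2\rceil=n/2$ to the resulting caterpillar---this is where you spend the parity hypothesis. The paper instead cases on the parity of the total number of weight-$1$ vertices: when that count is even, Alice may take \emph{any} spike, and if she happens to fall behind by one the residual then carries an odd number of $1$'s, forcing a strict win there; when the count is odd, the paper's move selection coincides with your first two cases, and evenness of $n$ is used only to guarantee that some cycle vertex has weight $0$. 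Your write-up has the virtue of making the $\HHH_2$ requirement for proper even induced subgraphs explicit, which the paper leaves tacit. One small expositional point: in your third case it is cleaner to check both of Bob's options rather than declaring $b_1=x_i$ ``optimal''---if Bob instead takes a spike then $w(a_1)=w(b_1)=0$ and your general residual argument already finishes.
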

\begin{proof}
Let $k \ge 4$ be a positive even integer.
It suffices to show that $C^*_k \in \HHH_2$.
Let $w: V(C^*_k) \to \{0,1\}$ be an arbitrary weight function.
We note that whatever Alice and Bob take in their first turn, $C^*_k - \{a_1,b_1\}$ is $C^*$-free and so Alice can win the game on $C^*_k - \{a_1,b_1\}$ by Theorem~\ref{thm:spiked-free graphs belongs to HHH2}.

\medskip

{\bf Case 1.} $|\{x \in V(C^*_{k}) \mid w(x)=1\}|$ is even

Suppose $w(a_1) \ge w(b_1)$.
Since Alice can win the game on $C^*_k - \{a_1,b_1\}$, $\sum_{i=2}^k w(a_i) \ge \sum_{i=2}^k w(b_i)$.
Then, by the supposition that $w(a_1) \ge w(b_1)$, $\sum_{i=1}^k w(a_i) \ge \sum_{i=1}^k w(b_i)$ and so Alice wins the game on $C^*_k$.

Suppose $w(a_1) < w(b_1)$.
Then $w(a_1)=0$ and $w(b_1)=1$.
Since Alice can win the game on $C^*_k - \{a_1,b_1\}$ and since $|\{x \in V(C^*_{k}) \setminus \{a_1,b_1\} \mid w(x)=1\}|$ is odd, $\sum_{i=2}^k w(a_i) \ge 1 + \sum_{i=2}^k w(b_i)$.
Thus $\sum_{i=1}^k w(a_i) \ge \sum_{i=1}^k w(b_i)$ and so Alice wins the game on $C^*_k$.

\medskip

{\bf Case 2.} $|\{x \in V(C^*_{k}) \mid w(x)=1\}|$ is odd

Suppose that there is a leaf of weight $1$.
Alice takes one of the leaves as $a_1$.
Then $w(a_1) \ge  w(b_1)$.
Since Alice can win the game on $C^*_k - \{a_1,b_1\}$, she can win the game on $C^*_k$.

Suppose that every leaf has weight $0$.
Since $k$ is even and $|\{x \in V(C^*_{k}) \mid w(x)=1\}|$ is odd, there is a vertex on a cycle of weight $0$.
Alice takes a leaf whose neighbor has weight $0$ as $a_1$.
Then $w(a_1) = 0 = w(b_1)$ and so Alice wins the game on $C^*_k$.
\end{proof}

\begin{prop}\label{prop:odd cycles forbidden}
Every fully spiked odd cycle belongs to $\CCC_2$.
\end{prop}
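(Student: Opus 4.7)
The plan is to exhibit, for each odd $n \ge 3$, a weight function witnessing $C^*_n \notin \HHH_2$; since $\HHH_2$ is hereditary, this will immediately give $C^*_n \in \CCC_2$. Because $C^*_n$ itself is a connected graph on the even number $2n$ of vertices, it suffices to produce a $w : V(C^*_n) \to \{0,1\}$ for which Bob has a winning strategy. The weight I intend to use is $w(x_i) = 1$ and $w(y_i) = 0$ for every $i$; then the total weight is $n$, and since $n$ is odd Alice must collect at least $(n+1)/2$ in weight to win.

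I will first dispose of Alice's forced opening move. Since the non-cut vertices of $C^*_n$ are exactly the leaves $y_1, \ldots, y_n$, Alice must play $a_1 = y_{i_0}$ for some $i_0$, contributing $0$ to her total. After this removal, $x_{i_0}$ lies on the $n$-cycle with no attached leaf and is therefore non-cut, so Bob will reply with $b_1 = x_{i_0}$, gaining weight $1$. The residual graph $T$ is a caterpillar on $2(n-1)$ vertices whose spine is the path $x_{i_0+1} x_{i_0+2} \cdots x_{i_0-1}$ (indices taken modulo $n$) and whose leaves are the surviving $y_j$'s.

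For all remaining rounds, Bob will play a mirror strategy on $T$. Relabeling its spine as $z_1 z_2 \cdots z_{n-1}$ and letting $\ell_j$ denote the leaf attached to $z_j$, I take $\sigma$ to be the involution $\sigma(z_j) = z_{n-j}$ and $\sigma(\ell_j) = \ell_{n-j}$; because $n-1$ is even, $\sigma$ is a weight-preserving automorphism of $T$ with no fixed point. Bob's rule is $b_k := \sigma(a_k)$ for all $k \ge 2$. The main obstacle will be to verify that this rule is always legal, namely that whenever Alice plays a non-cut vertex $v$ of the current graph $H$, the vertex $\sigma(v)$ is still in $H$ and non-cut in $H - v$. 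I plan to prove this as a small lemma by induction on the rounds: by construction the remaining graph stays a $\sigma$-invariant subtree of $T$; in a tree on at least three vertices the non-cut vertices are precisely its leaves, and any automorphism sends leaves to leaves, so $\sigma(v)$ is a leaf of $H$ and hence remains a leaf of $H - v$ unless $v$ and $\sigma(v)$ share their unique edge, which forces $H = K_2$ and is handled trivially as a terminal round.

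Once legality is established, the weight-preservation of $\sigma$ ensures that in each round from $k=2$ onward Alice and Bob collect equal weights, so they split the remaining total weight $n-1$ evenly. Alice's final score will therefore be $0 + (n-1)/2 = (n-1)/2 < n/2$, which means she fails to win on $(C^*_n, w)$. This yields $C^*_n \notin \AAA_2$, hence $C^*_n \notin \HHH_2$, and finally $C^*_n \in \CCC_2$.
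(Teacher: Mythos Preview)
Your proof is correct and follows the paper's overall architecture---same weight function, same opening exchange---but your mirror strategy on the residual caterpillar is genuinely different from the paper's. After the first round the paper labels the spine $x_1,\ldots,x_{2k}$ and leaves $y_1,\ldots,y_{2k}$ and has Bob pair \emph{consecutive} vertices, responding to $x_{2i-1}$ with $x_{2i}$ (and vice versa) and to $y_{2i-1}$ with $y_{2i}$; this pairing is not induced by any automorphism, so legality is verified by a short case analysis tracking which $y$'s have already been removed. You instead exploit the reflection $\sigma(z_j)=z_{n-j}$, $\sigma(\ell_j)=\ell_{n-j}$, which is a fixed-point-free automorphism because $n-1$ is even. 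This buys you a cleaner legality argument: $\sigma$-invariance is preserved round by round, and an automorphism of a tree sends leaves to leaves, so the only thing to rule out is the degenerate $K_2$ case---exactly as you note. Both approaches yield $w(a_k)=w(b_k)$ for $k\ge 2$ and hence Alice's total $(n-1)/2<n/2$.
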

\begin{proof}
Let $k$ be a positive integer and let $w: V(C^*_{2k+1}) \to \{0,1\}$ be a weight function on $C^*_{2k+1}$ defined so that all the leaves have weight $0$ and the other vertices have weight $1$.
It suffices to show that Bob has a winning strategy on $C^*_{2k+1}$ with the weight function $w$ in order to assert that $C^*_{2k+1} \in \CCC_2$.

Alice must choose one of the leaves of $C^*_{2k+1}$ as $a_1$.
Then Bob chooses the unique neighbor of $a_1$ as $b_1$.
Note that Bob is leading the game by one point so far.
We label the vertices of the graph $C^*_{2k+1} - \{a_1,b_1\}$ as given in Figure~\ref{fig:odd cycle minus a1 b1}.
\begin{figure}
\centering
  \begin{tikzpicture}[x=1.2cm, y=1.2cm]

    \vertex (x0) at (0,1) [label=above:$x_1$]{};
    \vertex (x1) at (1,1) [label=above:$x_2$]{};
    \vertex (y1) at (2,1) [label=above:$x_3$]{};
    \vertex (x2) at (3,1) [label=above:$x_4$]{};
    \vertex (y2) at (4,1) [label=above:$x_5$]{};
    \vertex (x3) at (5,1) [label=above:$x_{2k-2}$]{};
    \vertex (y3) at (6,1) [label=above:$x_{2k-1}$]{};
    \vertex (x4) at (7,1) [label=above:$x_{2k}$]{};

    \vertex (v0) at (0,0) [label=below:$y_1$]{};
    \vertex (v1) at (1,0) [label=below:$y_2$]{};
    \vertex (w1) at (2,0) [label=below:$y_3$]{};
    \vertex (v2) at (3,0) [label=below:$y_4$]{};
    \vertex (w2) at (4,0) [label=below:$y_5$]{};
    \vertex (v3) at (5,0) [label=below:$y_{2k-2}$]{};
    \vertex (w3) at (6,0) [label=below:$y_{2k-1}$]{};
    \vertex (v4) at (7,0) [label=below:$y_{2k}$]{};

    \node (z1) at (4.5,1) [label=center:$\cdots$]{};

    \path

    (x0) edge [-,thick] (x1)
    (x1) edge [-,thick] (y1)
    (y1) edge [-,thick] (x2)
    (x2) edge [-,thick] (y2)
    (x3) edge [-,thick] (y3)
    (y3) edge [-,thick] (x4)

    (y2) edge [-,thick,shorten >=24] (x3)
    (x3) edge [-,thick,shorten >=23] (y2)

    (x0) edge [-,thick] (v0)
    (x1) edge [-,thick] (v1)
    (x2) edge [-,thick] (v2)
    (x3) edge [-,thick] (v3)
    (x4) edge [-,thick] (v4)

    (y1) edge [-,thick] (w1)
    (y2) edge [-,thick] (w2)
    (y3) edge [-,thick] (w3)

	;

\end{tikzpicture}
  \caption{A graph obtained from $C^*_{2k+1}$ by deleting a leaf and its neighbor}
  \label{fig:odd cycle minus a1 b1}
\end{figure}
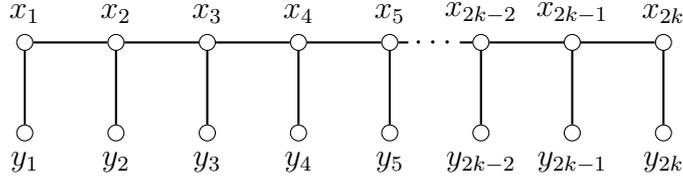
According to the all possible choices of Alice, Bob plays the game in the following rules:
\begin{itemize}


\item[(R1)] If Alice takes $y_{2i-1}$ (resp.\ $y_{2i}$) in her turn for some $i \in \{1,\ldots,k\}$, then Bob immediately takes $y_{2i}$ (resp.\ $y_{2i-1}$) in his next turn.

\item[(R2)] If Alice takes $x_{2i-1}$ (resp.\ $x_{2i}$) in her turn for some $i \in \{1,\ldots,k\}$, then Bob immediately takes $x_{2i}$ (resp.\ $x_{2i-1}$) in his next turn.
\end{itemize}


Now we explain why Bob can obey the rules.
It is clear that Bob can obey the rule (R1) since $y_1, \ldots, y_{2k}$ are leaves.
Suppose that Bob had successfully obeyed the rule (R2) until the $(j-1)$st turn for some $j \in \{1,\ldots,k\}$.
Assume that Alice took either $x_{2i-1}$ or $x_{2i}$ for some $i \in \{1,\ldots,k\}$ as $a_{j+1}$.
By symmetry, we may assume that $a_{j+1}=x_{2i-1}$.
Since Bob had obeyed the rules (R1) and (R2) until the $j$th turn, $x_{2i}$ have not been taken by anyone.
Then Alice must have taken $a_{j+1}=x_{2i-1}$ as a leaf and this implies that $y_{2i-1}$ had been taken before the $(j+1)$st stage.
Therefore, by (R1), $y_{2i}$ had also been taken before the $(j+1)$st stage, so $x_{2i}$ becomes a leaf or the last vertex after Alice's $(j+1)$st turn.
Thus Bob can take $x_{2i}$ as $b_{j+1}$ in his $(j+1)$st turn to obey the rule (R2).
Hence Bob can always obey the rules (R1) and (R2) in every turn.

As a result, $w(a_i) = w(b_i)$ for each $i=2,\ldots,k+1$.
Since $w(a_1) < w(b_1)$, Bob wins the game by following this strategy.
\end{proof}


\begin{figure}
\begin{center}
\begin{tikzpicture}[rotate=90][auto,thick]
    \tikzstyle{player}=[minimum size=15pt,inner sep=0pt,outer sep=0pt,ball color=black,circle]

{
    \path[white] (0:1cm)    node [player]   (a) {\bf $1$};
    \path[white] (0:2cm)    node [player]   (d) {\bf $0$};
    \path[white] (120:1cm)   node [player]   (b) {\bf $1$};
    \path[white] (120:2cm)   node [player]   (e) {\bf $0$};
    \path[white] (120*2:1cm)   node [player]  (c) {\bf $1$};
    \path[white] (120*2:2cm)   node [player]  (f) {\bf $0$};
}

{
   \draw[black,thick,-] (a) -- ++(b) ;
   \draw[black,thick,-] (b) -- ++(c) ;
   \draw[black,thick,-] (c) -- ++(a) ;
   \draw[black,thick,-] (a) -- ++(d) ;
   \draw[black,thick,-] (b) -- ++(e) ;
   \draw[black,thick,-] (c) -- ++(f) ;
}

    \end{tikzpicture}
\end{center}
\caption{A graph $C_3^*$ and a weight function $w: V(C_3^*) \to \{0,1\}$} 
\label{fig:asteroidal triple}
\end{figure}
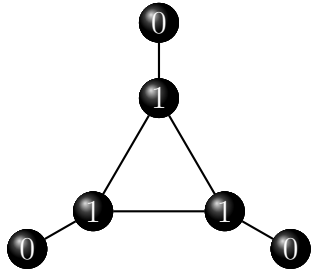
\begin{rem}
By Proposition~\ref{prop:odd cycles forbidden}, the fully spiked cycle $C_3^*$ illustrated in Figure~\ref{fig:asteroidal triple} does not belong to $\HHH_2$.
On the other hand, it is easy to see that any graph $G$ with at most five vertices is $C^*$-free and so, by Theorem~\ref{thm:spiked-free graphs belongs to HHH2}, $G \in \HHH_2$.
Therefore, among the graphs not in $\HHH_2$, $C_3^*$ is one with the smallest number of vertices.
By the way, $C_3^*$ is the only graph among the graphs with six vertices and containing a fully spiked cycle as an induced subgraph.
Thus $C_3^*$ is the only graph with the smallest number of vertices among the graphs not in $\HHH_2$.
\end{rem}

\section{Concluding remarks}

In this paper, we showed that a graph $G$ belongs to $\HHH_2$ if $G$ is $C^*$-free (Theorem~\ref{thm:spiked-free graphs belongs to HHH2}).
Then we presented the fully spiked odd cycles as forbidden subgraphs for $\HHH_2$ (Proposition~\ref{prop:odd cycles forbidden}).
This implies that a graph $G$ belongs to $\HHH_2$ only if $G$ is $C^*_n$-free for any odd integer $n \ge 3$.
We would like to ask whether the converse is true.

\begin{conj}\label{conj 3.1}
A graph $G$ belongs to $\HHH_2$ if and only if $G$ is $C^*_n$-free for any odd integer $n \ge 3$.
\end{conj}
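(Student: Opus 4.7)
The forward direction ($\Rightarrow$) is immediate: since $\HHH_2$ is hereditary, if $G \in \HHH_2$ contained some $C^*_{2k+1}$ as an induced subgraph then $C^*_{2k+1}$ itself would lie in $\HHH_2$; but $C^*_{2k+1}$ is connected and of even order, so this would force $C^*_{2k+1} \in \AAA_2$, contradicting Proposition~\ref{prop:odd cycles forbidden}.

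For the reverse direction, I propose to mimic the inductive skeleton of Theorem~\ref{thm:spiked-free graphs belongs to HHH2}. Fix a graph $G$ that is $C^*_n$-free for every odd $n \ge 3$, let $H$ be a connected induced subgraph of $G$ of even order with weight function $w \colon V(H) \to \{0,1\}$, and assume inductively that Alice wins on every strictly smaller connected even induced subgraph of $G$. Case~1 of Theorem~\ref{thm:spiked-free graphs belongs to HHH2} (some $v \in \Omega(H)$ has $w(v)=1$) goes through verbatim, as does the subcase of Case~2 in which some cycle of $H$ contains a non-cut vertex of $H$, because the only role of $C^*$-freeness in the original proof was to produce such a non-cut vertex via Lemma~\ref{lem:spiked-free non-cut vertex}.

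The residual case, which I also expect to be the main obstacle, is when every vertex of $\Omega(H)$ has weight $0$ \emph{and} every cycle of $H$ consists entirely of cut vertices of $H$. Rerunning the construction of Lemma~\ref{lem:spiked-free non-cut vertex} on any induced cycle of $H$ now supplies a fully spiked cycle induced in $G$, so the odd-$C^*$-freeness of $G$ forces every induced cycle of $H$ to have even length; consequently $H$ is bipartite, $\Omega(H)$ consists solely of leaves of $H$, and some fully spiked even cycle sits inside $H$ as an induced subgraph. My plan here is a two-move ``spike-and-rebound'' strategy: Alice chooses a spike $\ell$ of $H$ with neighbour $p$ so that, after the exchange $\{a_1, b_1\} = \{\ell, p\}$ (forced by the fact that $p$ is the only vertex of $\Omega(H-\ell) \setminus \Omega(H)$ available to Bob of weight $\ge w(\ell)$), a vertex $q$ previously hidden behind $p$ appears in $\Omega(H-\{\ell,p\})$ with $w(q) \ge w(p)$; Alice then plays $q$ on her second turn, cancelling Bob's gain, and the induction resumes on $H - \{a_1, b_1, a_2, b_2\}$, which is again a connected even induced subgraph of $G$.

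The main obstacle is proving that such a spike $\ell$ always exists in this residual case. The construction in Proposition~\ref{prop:odd cycles forbidden} shows that in the presence of a fully spiked \emph{odd} cycle Bob has a perfect pairing of the vertices of $C^*_{2k+1} - \{a_1, b_1\}$ that defeats every Alice move, so the odd-cycle restriction must, in some global way, destroy that pairing inside $H$. I expect this to require a genuinely global matching/parity argument on the bipartite skeleton of $H$, or at least an analysis of the block-cut tree of $H$ that tracks the combined weight parity inside each biconnected block, and this is where any complete proof of Conjecture~\ref{conj 3.1} will have to go beyond the purely local moves used in Theorem~\ref{thm:spiked-free graphs belongs to HHH2}.
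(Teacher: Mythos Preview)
This statement is labelled a \emph{conjecture} in the paper, and the paper offers no proof of it; the only discussion is the paragraph following the conjecture, which notes (as you do) that the forward direction follows from Proposition~\ref{prop:odd cycles forbidden} and that the reverse direction is open. So there is no paper proof to compare against, and the honest acknowledgement at the end of your proposal that the residual case is the genuine obstacle is exactly the state of affairs.

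That said, your ``spike-and-rebound'' sketch has a structural gap \emph{in addition to} the one you flag. Suppose every vertex of $\Omega(H)$ has weight~$0$, Alice plays a spike $\ell$ with neighbour $p$, and Bob replies with $p$ where $w(p)=1$. You propose that Alice then plays some $q\in\Omega(H-\{\ell,p\})$ with $w(q)=1$ and ``the induction resumes on $H-\{a_1,b_1,a_2,b_2\}$.'' But at the moment Alice plays $q$ she is the \emph{first} mover on the connected even graph $H-\{\ell,p\}$, and the inductive hypothesis only promises she can \emph{tie} there; if the number of weight-$1$ vertices remaining in $H-\{\ell,p\}$ happens to be even, Bob may achieve $\sum_{i\ge 2}w(b_i)=\sum_{i\ge 2}w(a_i)$, and Alice loses overall by exactly the point she conceded in round~$1$. (Equivalently: after you commit to $a_2=q$ you have no control over $b_2$, so $w(a_1)+w(a_2)\ge w(b_1)+w(b_2)$ need not hold.) Proposition~\ref{prop:even cycles not forbidden} handles precisely this issue for $C_k^*$ by splitting on the parity of $|\{x:w(x)=1\}|$ \emph{before} choosing Alice's first move, and any extension of your scheme to general $H$ will need the same parity bookkeeping; in the odd-parity subcase the two-move plan as written does not close.

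A smaller point: Bob is not \emph{forced} to take $p$ after Alice plays the spike $\ell$ (your parenthetical justification uses $w(\ell)=0$, which constrains nothing). This is harmless, though, since if Bob declines $p$ then $b_1\in\Omega(H)\setminus\{\ell\}$ has weight~$0$ and straight induction on $H-\{a_1,b_1\}$ finishes; conversely, if $H$ has any leaf that is \emph{not} a spike, Proposition~\ref{prop:omega subset} lets Alice play it and again force $w(b_1)=0$. So the genuinely hard subcase is narrower than you state: every non-cut vertex of $H$ is a leaf, every leaf is a spike, and every spike's neighbour has weight~$1$.
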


To prove Conjecture~\ref{conj 3.1}, it suffices to show that every graph which contains a fully spiked even cycle as an induced subgraph but does not contain a fully spiked odd cycle belongs to $\HHH_2$, which is supported by Proposition~\ref{prop:even cycles not forbidden}.

Now we give another conjecture.

\begin{conj}\label{conj 3.2}
A graph $G$ belongs to $\HHH$ if and only if $G$ is $C^*_n$-free for any odd integer $n \ge 3$.
\end{conj}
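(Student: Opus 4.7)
The plan splits into a routine direction and a substantially harder one. The necessity direction is immediate: if $G$ contains some fully spiked odd cycle $C^*_n$ (with $n \ge 3$ odd) as an induced subgraph, then Proposition~\ref{prop:odd cycles forbidden} exhibits a $\{0,1\}$-weight function on $C^*_n$ under which Bob wins, so $G \notin \HHH_2$; since $\HHH \subset \HHH_2$ by Definition~\ref{def:hhh}, also $G \notin \HHH$.

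For the sufficiency direction, I would fix a graph $G$ containing no induced $C^*_n$ for any odd $n \ge 3$, take an arbitrary connected induced subgraph $H$ of even order, and an arbitrary weight function $w : V(H) \to \RR$. I would argue by induction on $|V(H)|$, in the spirit of Theorem~\ref{thm:spiked-free graphs belongs to HHH2}, looking for a non-cut vertex $a_1$ with $w(a_1) \ge w(b_1)$ for every legal Bob response $b_1$ and with $H - \{a_1, b_1\}$ still in the hereditary class of graphs forbidding odd fully spiked cycles. The broad strategy is therefore two-step: first settle Conjecture~\ref{conj 3.1} (the $\{0,1\}$-valued case) and then upgrade to arbitrary real weights.

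Two genuinely new obstacles arise along the way. The first appears already for $\{0,1\}$-weights: the clean Case~1/Case~2 dichotomy of Theorem~\ref{thm:spiked-free graphs belongs to HHH2} depends, through Lemma~\ref{lem:spiked-free non-cut vertex}, on the absence of \emph{all} fully spiked cycles, and this lemma fails once fully spiked even cycles are admitted. The global parity trick used in Proposition~\ref{prop:even cycles not forbidden} does not transplant to a general host graph. I would attack this by proving a strengthening of Lemma~\ref{lem:spiked-free non-cut vertex} for the class of graphs forbidding only \emph{odd} fully spiked cycles: that every cycle of such $H$ contains a non-cut vertex whose deletion creates no new induced $C^*_{2k+1}$. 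Given such a vertex, Proposition~\ref{prop:omega subset} combined with the case hypothesis pins down $w(b_1)=0$ as in Case~2 of Theorem~\ref{thm:spiked-free graphs belongs to HHH2}, and the induction closes.

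The second, and harder, obstacle is upgrading $\{0,1\}$-weights to arbitrary real weights. A pure greedy strategy (``Alice takes the heaviest non-cut vertex'') does not suffice, since Bob may respond with a vertex of strictly larger weight that was a cut vertex immediately before Alice moved. I would instead seek a matching-based strategy in the spirit of Seacrest--Seacrest~\cite{seacrest2012grabbing}: construct a perfect matching $M$ of $V(H)$ adapted to the cut-vertex and spike structure of $H$, and have Alice always claim the heavier endpoint of the $M$-edge containing her target in each of her turns. Producing such a ``grabbing-compatible'' matching in the full class of graphs forbidding only odd fully spiked cycles is where I expect the principal difficulty to lie, because no tree-decomposition or clique-tree representation of this class is currently available; both conjectures thus appear to require genuine new structural insight into the class.
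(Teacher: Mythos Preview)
The statement you are addressing is Conjecture~\ref{conj 3.2}, which the paper explicitly leaves open; there is no proof in the paper to compare against. Your treatment of the necessity direction is correct and matches the paper's own reasoning: Proposition~\ref{prop:odd cycles forbidden} places each $C^*_n$ with odd $n\ge 3$ in $\CCC_2$, so any graph containing one as an induced subgraph lies outside $\HHH_2\supset\HHH$.

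For sufficiency, what you have written is a plan rather than a proof, and you are candid about this. Two remarks are in order. First, one step of your outline is vacuous: the class of graphs avoiding all odd fully spiked cycles is defined by forbidden \emph{induced} subgraphs and is therefore hereditary, so deleting a vertex can never ``create a new induced $C^*_{2k+1}$''; the induction hypothesis applies automatically to $H-\{a_1,b_1\}$. The genuine obstacle in Case~2 is simply that Lemma~\ref{lem:spiked-free non-cut vertex} fails outright once even fully spiked cycles are permitted --- in $C^*_{2k}$ itself every vertex on the cycle is a cut vertex --- so there may be no non-cut vertex on a cycle for Alice to take at all. Second, your matching-based plan for arbitrary real weights is reasonable in spirit, but, as you yourself note, no grabbing-compatible matching is known to exist for this class. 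This is precisely why the paper records the statement as a conjecture rather than a theorem. In short: your proposal correctly locates the difficulties but does not resolve them, and neither does the paper.
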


If Conjecture~\ref{conj 3.2} is true, then Conjecture~\ref{conj:bipartite} is also true.

\section{Acknowledgement}
This research was supported by Basic Science Research Program through the National Research Foundation of Korea(NRF) funded by the Ministry of Education(NRF-2018R1D1A1B07049150).

\end{document}